\def\eps{\varepsilon }
\def\RR{\mathbb R}
\def\mm{\mathcal M}
\newcommand{\set}[1]{\left\lbrace #1\right\rbrace}
\providecommand{\abs}[1]{\left\lvert#1\right\rvert}
\providecommand{\norm}[1]{\left\lVert#1\right\rVert}
\newcommand{\qtq}[1]{\quad\text{#1}\quad}
\newcommand{\scalar}[1]{\left\langle#1\right\rangle}
\DeclareMathOperator{\sgn}{sign}
\DeclareMathOperator{\real}{Re}
\newtheorem{theorem}{Theorem}
\newtheorem{proposition}[theorem]{Proposition}
\newtheorem{lemma}[theorem]{Lemma}
\theoremstyle{definition}
\newtheorem*{definition}{Definition}
\theoremstyle{remark}
\newtheorem{remark}[theorem]{Remark}
\newtheorem{remarks}[theorem]{Remarks}
\newtheorem{example}[theorem]{Example}
\begin{document}
\title{Representations of dual spaces}

\author[T. Delzant]{Thomas Delzant}
\address{UFR de mathématique et d'informatique, Université de Strasbourg, 7 rue René Descartes\\
         67084 Strasbourg Cedex, France}
\email{delzant@math.unistra.fr}
\author[V. Komornik]{Vilmos Komornik}
\address{UFR de mathématique et d'informatique, Université de Strasbourg, 7 rue René Descartes\\
         67084 Strasbourg Cedex, France}
\email{komornik@math.unistra.fr}
\subjclass[2010]{Primary 46B10; Secondary 46E30}
\keywords{Banach space, dual space, Hilbert space, Orlicz space, uniform convexity, Lagrange multiplier, Riesz representation theorem, Lebesgue integral}
\date{Version 2019-02-18}

\begin{abstract}
We give a nonlinear representation of the duals for a class of Banach spaces.
This leads to classroom-friendly proofs of the classical  representation theorems $H'=H$ and $(L^p)'=L^q$.
Our proofs extend to a family of Orlicz spaces, and yield as an unexpected byproduct a version of the Helly--Hahn--Banach theorem.
\end{abstract}
\maketitle

\section{A bijection between normed spaces and their duals}\label{s1}

Let $X$ be a real normed space and $X'$ its dual.
We recall the following two notions:

\begin{definition}\mbox{}
\begin{itemize}
\item $X$ is \emph{uniformly convex} if for every $\eps>0$ there exists a $\delta>0$ such that
\begin{equation}\label{1}
\qtq{if}\norm{x}=\norm{y}=1\qtq{and}\norm{\frac{x+y}{2}}\ge 1-\delta,\qtq{then}\norm{x-y}\le \eps.
\end{equation}

\item A function $F:X\to\RR$ is \emph{G\^ateaux differentiable} in $x$ if there exists a continuous linear functional $F'(x)\in X'$ such that for all $u\in X$,
\begin{equation*}
F(x+tu)=F(x)+tF'(x)u+o(t)\qtq{as}t\to 0.
\end{equation*}
\end{itemize}
\end{definition}

Let us denote by $S$ and $S'$ the unit spheres of $X$ and $X'$, respectively.
We recall (see Lemma \ref{l3} below) that if $X$ is a uniformly convex Banach space, then the restriction to $S$ of every linear form $y\in S'$ achieves its maximum in a unique point $M(y)$; furthermore, the map $M:S'\to S$ is continuous.

\begin{theorem}\label{t1}
Let $X$ be a uniformly convex Banach space. 
If its norm $N:X\to\RR$ is G\^ateaux differentiable on the unit sphere $S$, then   $M:S'\to S$ is a continuous \emph{bijection}, and it inverse is the derivative $N':S\to S'$ of the norm.
\end{theorem}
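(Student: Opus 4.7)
The plan is to verify two facts: that the derivative $N'$ sends $S$ into $S'$, and that the compositions $N'\circ M$ and $M\circ N'$ are the identities on $S'$ and $S$ respectively. Together with the continuity of $M$ (already stated and to be established in Lemma \ref{l3}), these force $M$ to be a continuous bijection with inverse $N'$.

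First I would show $N'(S)\subset S'$ using two basic properties of $N$. The norm is positively $1$-homogeneous, so differentiating $N(tx)=tN(x)$ at $t=1$ gives Euler's identity $N'(x)x=N(x)=1$ for every $x\in S$; in particular $\norm{N'(x)}\ge 1$. Convexity of $N$ gives the standard affine lower bound $N(u)\ge N(x)+N'(x)(u-x)$, which on $S$ simplifies to $N'(x)u\le 1$. Hence $\norm{N'(x)}=1$.

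Next I would check that $M\circ N'=\mathrm{id}_S$. For $x\in S$ the two inequalities above say exactly that $x$ is a maximizer of the unit functional $N'(x)$ on $S$. The uniqueness part of Lemma \ref{l3} then yields $M(N'(x))=x$.

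The substantive step is $N'\circ M=\mathrm{id}_{S'}$, which I would carry out by a Lagrange-multiplier-style argument exploiting Gâteaux differentiability. Fix $y\in S'$ and set $x=M(y)$, so that $y(x)=\norm{y}=1$. For an arbitrary $u\in X$ and $t$ small enough that $N(x+tu)>0$, the vector $(x+tu)/N(x+tu)$ lies in $S$, so by maximality of $x$ for $y$,
\[
1+ty(u)=y(x+tu)\le N(x+tu)=1+tN'(x)u+o(t).
\]
Dividing by $t$ and letting $t\to 0^+$ gives $y(u)\le N'(x)u$, while $t\to 0^-$ flips the inequality. Hence $y(u)=N'(x)u$ for every $u$, i.e., $y=N'(M(y))$. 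The asymmetric one-sided limit is the only real technical point and is precisely where Gâteaux differentiability of $N$ is essential; the rest is routine bookkeeping.
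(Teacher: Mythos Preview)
Your proof is correct, and the overall architecture---show $N'(S)\subset S'$, then verify that $M$ and $N'$ are mutual inverses---matches the paper's. The details differ in two places.

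For $\norm{N'(x)}=1$, the paper uses the triangle inequality to get $N'(x)z\le 1$, whereas you invoke convexity of $N$; these are equivalent one-line observations. For the surjectivity step $N'\circ M=\mathrm{id}_{S'}$, the paper recasts $x=M(y)$ as the point of \emph{minimal norm} on the affine hyperplane $\{z:y(z)=1\}$ and then sets the directional derivative $\frac{d}{dt}N(x+t(z-y(z)x))\big|_{t=0}$ equal to zero; as noted in Remark~\ref{r2}\,(i), this reformulation is adopted precisely because the constraint $N(z)=1$ is not known to be $C^1$. Your argument sidesteps that reformulation entirely: you use only the inequality $y(v)\le N(v)$ (which holds since $\norm{y}=1$) with equality at $v=x$, expand both sides to first order in $t$, and take one-sided limits. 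This is a genuinely more direct route to the same conclusion and shows the hyperplane-minimization detour is not strictly necessary. The trade-off is that the paper's version makes the Lagrange-multiplier structure more visible, which it then reuses verbatim in the Orlicz-space proof of Theorem~\ref{t10}.
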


\begin{proof}
If $x,z\in S$ and $t>0$, then by the triangular inequality we have
\begin{equation*}
\frac{N(x+tz)-N(x)}{t}
\le \frac{N(tz)}{t}
=N(z)=1,
\end{equation*}
so that $N'(x)z\le 1$, with equality if $z=x$. 
It follows that $\norm{N'(x)}=1$.

By uniform convexity, the linear form $N'(x)$ achieves its maximum at a unique point (see Lemma \ref{l3} below), so that $N'(x)z<1$ for all $z\in S$, different from $x$.
Hence every $x\in S$ is uniquely determined by $N'(x)$, and thus $N'$ is injective.

To prove that $N'$ is onto, pick $y\in S'$  arbitrarily.
Then $x:=M(y)$ is a point of minimal norm of the affine hyperplane $\set{z\in X\ :\ y(z)=1}$, and therefore
\begin{equation*}
\frac{d}{dt}\norm{x+t(z-y(z)x)}|_{t=0}=0,\qtq{i.e.,}
N'(x)(z-y(z)x)=0
\end{equation*}
for every $z\in X$.
Since $N'(x)x=1$, this is equivalent to 
$y=N'(x)$.
\end{proof}

\begin{remarks}\label{r2}\mbox{}
\begin{enumerate}[\upshape (i)]
\item The above proof is essentially an application of the Lagrange multiplier theorem to maximize a linear functional on the unit sphere.
Since the norm is not assumed to be a $C^1$ function, we have considered an equivalent extremal problem to minimize the norm on a closed affine hyperplane. 
\item The homogeneous extension of $N':S\to S'$ yields a bijection between $X$ and $X'$.
\item It follows from the theorem that if $N'$ is continuous on $S$, then $M$ is a homeomorphism between $S'$ and $S$, and its homogeneous extension is a homeomorphism between $X'$ and $X$.
\item If $N'$ is continuous on $S$, then it is also continuous on $X\setminus\set{0}$ by homogeneity.
Then the norm  is in fact Fr\'echet differentiable in every $a\in X\setminus\set{0}$, and hence it is a $C^1$ function. 
To see this, for any fixed $\eps>0$ choose a small open ball $B_r(a)\subset X\setminus\set{0}$ such that $\norm{N'(x)-N'(a)}<\eps$ for all $x\in B_r(a)$.
For any fixed $u\in B_r(0)$, applying the Lagrange mean value theorem to the function 
\begin{equation*}
t\mapsto N\left(a+tu\right)-tN'(a)u
\end{equation*}
in $[0,1]$ we obtain that 
\begin{equation*}
\norm{N\left(a+u\right)-N(a)-N'(a)u}\le \eps\norm{u}.
\end{equation*}
\end{enumerate}
\end{remarks}

We recall the short proof of the lemma used above:

\begin{lemma}\label{l3}
Let $X$ be a uniformly convex Banach space, and $y\in S'$. There exists a (unique) point $x=M(y)\in S$ such that
\begin{equation*}
y(x)=1,\qtq{and}y(z)<1\qtq{for all other}z\in S.
\end{equation*} 
Furthermore, the map $M:S'\to S$ is a continuous.
\end{lemma}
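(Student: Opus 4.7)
The plan is to prove existence via a Cauchy-sequence argument driven by uniform convexity, deduce uniqueness from the strict convexity implied by \eqref{1}, and then obtain continuity by essentially repeating the existence argument with $y$ fixed and $x_n := M(y_n)$.

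For existence, since $\norm{y}=1$, pick a maximizing sequence $x_n\in S$ with $y(x_n)\to 1$. I would show $(x_n)$ is Cauchy: for any $n,m$, uniform convexity applied to $x_n$ and $x_m$ requires a lower bound on $\norm{(x_n+x_m)/2}$. This bound comes for free from the functional itself, since
\begin{equation*}
\norm{\tfrac{x_n+x_m}{2}}\ge y\bigl(\tfrac{x_n+x_m}{2}\bigr)=\tfrac{y(x_n)+y(x_m)}{2}\to 1.
\end{equation*}
Given $\eps>0$, the corresponding $\delta$ from \eqref{1} then forces $\norm{x_n-x_m}\le\eps$ for large $n,m$. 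By completeness $x_n\to x$ for some $x\in S$, and by continuity $y(x)=1$. For uniqueness, if $y(x)=y(x')=1$ with $x,x'\in S$, the same computation gives $\norm{(x+x')/2}=1$, and uniform convexity (used with arbitrarily small $\eps$) yields $x=x'$.

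For the continuity of $M:S'\to S$, let $y_n\to y$ in $S'$ and set $x_n:=M(y_n)$, $x:=M(y)$. First I would bound
\begin{equation*}
y(x_n)=y_n(x_n)+(y-y_n)(x_n)\ge 1-\norm{y-y_n}\to 1,
\end{equation*}
so $(x_n)$ is itself a maximizing sequence for $y$. Applying the same uniform-convexity estimate to the pair $x_n,x$ gives $\norm{(x_n+x)/2}\ge (y(x_n)+1)/2\to 1$, hence $\norm{x_n-x}\to 0$.

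I do not anticipate a genuine obstacle: uniform convexity is strong enough that the only moving part is the observation that $y$ itself witnesses the requisite lower bound on the norm of the midpoint, so both existence and continuity reduce to the same two-line computation. The one place to be slightly careful is that completeness of $X$ is genuinely needed to close up the Cauchy sequence, and that uniqueness must be extracted from \eqref{1} by letting $\eps\to 0$ rather than from any explicit modulus.
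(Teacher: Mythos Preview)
Your proposal is correct and follows essentially the same route as the paper: a maximizing sequence is shown to be Cauchy via the midpoint lower bound $\norm{(x_n+x_m)/2}\ge y((x_n+x_m)/2)$, completeness gives the limit, and continuity is obtained by observing that $y(x_n)\to 1$ when $x_n=M(y_n)$ and $y_n\to y$. The only cosmetic difference is that the paper packages uniqueness and continuity by reusing the Cauchy argument (any sequence with $y(x_n)\to 1$ converges, hence to the unique maximizer), whereas you compare $x_n$ directly with $x$; both are the same computation.
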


\begin{proof}
Since $X$ is complete and $S$ is closed, it suffices to show that every sequence $(x_n)\subset S$ satisfying $y(x_n)\to 1$ is a Cauchy sequence.
Given $\eps>0$ arbitrarily, choose $\delta>0$ according to the uniform convexity, and then choose a large integer $n_0$ such that $y(x_n)>1-\delta$ for all $n\ge n_0$.
If $m,n\ge n_0$, then 
\begin{equation*}
\norm{\frac{x_n+x_m}{2}}\ge y\left(\frac{x_n+x_m}{2}\right)>1-\delta
\end{equation*}
and therefore $\norm{x_n-x_m}<\eps$.
\medskip

For the continuity of $M$ we have to show that if $y_n\to y$ in $S'$, then $M(y_n)\to M(y)$.
Writing $x_n:=M(y_n)$ and $x:=M(y)$ for brevity, since
\begin{equation*}
\abs{y(x_n)-1}=\abs{y(x_n)-y_n(x_n)}\le\norm{y_n-y}\to 0,
\end{equation*}
$y(x_n)\to 1$, and therefore $(x_n)$ converges to a point $\tilde x\in S$ satisfying $y(\tilde x)=1$ as in the first part of the proof.
Since $y(x)=1$ we have $\tilde x=x$ by uniqueness.
\end{proof}

\section{Applications of Theorem \ref{t1}}\label{s2}

Theorem \ref{t1} has important consequences.
We start with a strengthened version of the Helly--Hahn--Banach theorem for special spaces:

\begin{theorem}\label{t4}
Let $X$ be a Banach space satisfying the hypotheses of Theorem \ref{t1}, and $y_1$ a continuous linear functional defined on some subspace $X_1$ of $X$.

Then $y_1$ extends to a \emph{unique} continuous linear functional $y\in X'$ with preservation of the norm.
\end{theorem}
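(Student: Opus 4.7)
The plan is to obtain existence from the classical Hahn--Banach theorem (which already produces at least one norm-preserving extension) and to deduce uniqueness from Theorem \ref{t1} together with Lemma \ref{l3}. Existence requires no new argument, so I would focus entirely on uniqueness. After renormalizing to $\norm{y_1}=1$, suppose that $y,\tilde y\in S'$ are two norm-preserving extensions of $y_1$; the goal is to prove $y=\tilde y$.

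The key observation is that the midpoint $w:=(y+\tilde y)/2$ automatically lies on the unit sphere $S'$. Indeed, the triangle inequality in $X'$ gives $\norm{w}\le 1$, while the restriction of $w$ to $X_1$ coincides with $y_1$, which has norm $1$; hence $\norm{w}\ge 1$ and therefore $w\in S'$.

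Then I would apply Lemma \ref{l3} to $w$: writing $z:=M(w)\in S$, we have $w(z)=1$, equivalently $y(z)+\tilde y(z)=2$. Since $y,\tilde y\in S'$ and $z\in S$, both inequalities $y(z)\le 1$ and $\tilde y(z)\le 1$ must in fact be equalities, i.e., $y(z)=\tilde y(z)=1$. The uniqueness statement in Lemma \ref{l3} identifies $z$ as the common maximizer of $y$ and of $\tilde y$ on $S$, so $M(y)=M(\tilde y)=z$. Since $M:S'\to S$ is a bijection by Theorem \ref{t1}, we conclude $y=\tilde y$.

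I do not anticipate a serious obstacle: the nontrivial content (a form of strict convexity of $X'$ hidden in the G\^ateaux differentiability of the norm of $X$) has already been packaged into the bijectivity of $M$ in Theorem \ref{t1}. The only step requiring a moment of thought is recognizing that $w$ must land \emph{exactly} on $S'$, which is precisely where the norm-preservation of both extensions on $X_1$ is used; once this is in place, Lemma \ref{l3} does all the remaining work.
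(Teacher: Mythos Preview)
Your argument is correct. The route, however, differs from the paper's in a way worth noting. The paper does \emph{not} invoke Hahn--Banach for existence; indeed, Theorem~\ref{t4} is advertised in the abstract as a \emph{byproduct} of Theorem~\ref{t1}, so appealing to Hahn--Banach would be somewhat against the spirit of the result. Instead the paper applies Theorem~\ref{t1} to the (closure of the) subspace $X_1$ itself: this produces a point $x_1\in S\cap X_1$ with $y_1=N'(x_1)|_{X_1}$, equivalently $y_1(x_1)=1$, and then $y:=N'(x_1)\in S'$ is the desired norm-preserving extension, constructed explicitly. That same $x_1$ dispatches uniqueness in one line: any extension $\tilde y\in S'$ satisfies $\tilde y(x_1)=y_1(x_1)=1$, so $M(\tilde y)=x_1$ by Lemma~\ref{l3}, and hence $\tilde y=N'(x_1)$ by Theorem~\ref{t1}. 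Your midpoint device is a clean substitute that never looks inside $X_1$, but it needs Hahn--Banach to get off the ground; the paper's approach is self-contained and yields the extension by an explicit formula, which is precisely the advertised payoff.
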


\begin{proof}
We may assume that $\norm{y_1}=1$, and we may assume by a continuous extension to the closure of $X_1$ that $X_1$ is closed.
Then $X_1$ also satisfies the hypotheses of Theorem \ref{t1}, so that $y_1=N'(x_1)|_{X_1}$ for a unique $x_1\in S\cap X_1$. 
It follows that $y:=N'(x_1)$ is an extension of $y_1$, and $\norm{N'(x_1)}=1=\norm{y_1}$.

If $\tilde y\in S'$ is an arbitrary extension of $y_1$, then $\tilde y=N'(\tilde x_1)$ for a \emph{unique} $\tilde x_1\in S$, characterized by the equality $\tilde y(\tilde x_1)=1$.
Since $x_1\in S$ and $\tilde y(x_1)=y_1(x_1)=1$, $\tilde x_1=x_1$ by uniqueness. 
\end{proof}

\begin{remark}\label{r5}
Geometrically the proof is based on the observation that the unique hyperplane separating $x_1$ from the unit ball is the tangent hyperlane at $x_1$.
\end{remark}

We know that every uniformly convex Banach space is reflexive.
This can be seen easily under the further assumption that $X'$ satisfies the hypotheses of Theorem \ref{t1}:

\begin{proposition}\label{p6}
If $X, X'$ are uniformly convex Banach spaces and the norm of $X'$ is G\^ateaux differentiable on $S'$, then $X$ is reflexive.
\end{proposition}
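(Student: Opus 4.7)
The plan is to show that the canonical isometric embedding $J:X\to X''$, defined by $J(x)(y)=y(x)$, is surjective. Since $J$ is linear and norm-preserving, by homogeneity it is enough to establish $S''\subset J(S)$.

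The key observation is that the hypotheses let us apply Theorem \ref{t1} cleanly to $X'$: indeed, $X'$ is a uniformly convex Banach space whose norm is Gâteaux differentiable on $S'$, so the map $M:S''\to S'$ sending each $\xi$ to the unique point of $S'$ where it attains its supremum is a \emph{bijection}. On $X$ itself we have no differentiability assumption; however, uniform convexity of $X$ is exactly what is needed to invoke Lemma \ref{l3}, which is all we will use there.

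Fix $\xi\in S''$. Set $y:=M(\xi)\in S'$, so $\xi(y)=1$ and $y$ is the unique supremum-point of $\xi$ on $S'$. Apply Lemma \ref{l3} to $X$ and the linear form $y$ to obtain $x=M(y)\in S$ with $y(x)=1$. Now inspect $J(x)\in X''$: we have $J(x)(y)=y(x)=1$ and $\norm{J(x)}\le \norm{x}=1$, so $J(x)\in S''$ and $y$ realises $\norm{J(x)}$ on $S'$. By Lemma \ref{l3} applied to $X'$ this realising point is unique, so $M(J(x))=y=M(\xi)$. The bijectivity of $M:S''\to S'$ coming from Theorem \ref{t1} then forces $\xi=J(x)$, which completes the argument.

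I do not anticipate a real obstacle; the proof is essentially a symmetric double application of the bijection picture developed in Section \ref{s1}. The one mild subtlety worth flagging is the asymmetry in the hypotheses: since Gâteaux differentiability is assumed only for the norm of $X'$ and not of $X$, we cannot apply Theorem \ref{t1} on $X$ itself. Fortunately we never need the full bijection $M:S'\to S$ on $X$ — only existence and uniqueness of a max-point of $y$ on $S$, which is exactly Lemma \ref{l3}, used to manufacture the preimage $x$ of $\xi$.
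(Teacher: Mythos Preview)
Your proof is correct and follows essentially the same route as the paper: pick $\xi\in S''$, let $y=M(\xi)\in S'$ via Lemma~\ref{l3}, let $x=M(y)\in S$ via Lemma~\ref{l3} again, observe that $J(x)\in S''$ also attains value $1$ at $y$, and conclude $\xi=J(x)$ from the injectivity of $M:S''\to S'$ supplied by Theorem~\ref{t1} applied to $X'$. Your closing remark on the asymmetry of the hypotheses is apt and matches exactly how the argument is structured.
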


\begin{proof}
Given $\Phi\in X''$ arbitrarily, we have to find $x\in X$ satisfying $\Phi(y)=y(x)$ for all $y\in X'$.

We may assue that $\norm{\Phi}=1$.
By Lemma \ref{l3} there exists a unique $y\in S'$ satisfying $\Phi(y)=1$, and then a unique $x\in S$ satisfying $y(x)=1$.
Defining $\Phi_x\in X''$ by $\Phi_x(\tilde y):=\tilde y(x)$ for all $\tilde y\in X'$, we have $\Phi, \Phi_x\in S''$ and $\Phi(y)=1=\Phi_x(y)$. 
Applying Theorem \ref{t1} to $X'$ we conclude that $\Phi= \Phi_x$.
\end{proof}

Now we turn to the description of the duals of Hilbert and $L^p$ spaces.

\begin{theorem}[Riesz--Fr\'echet \cite{A2Fre1907,A2Rie1907C4}]\label{t7}
If $X$ is a Hilbert space with scalar product $\scalar{\cdot,\cdot}$, then the formula 
\begin{equation*}
\Phi(x)u:=\scalar{x,u},\quad x,u\in X
\end{equation*} 
defines an isometric isomorphism $\Phi$ of $X$ onto $X'$.
\end{theorem}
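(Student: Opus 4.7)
The plan is to verify that a Hilbert space $X$ satisfies the hypotheses of Theorem \ref{t1}, identify explicitly the derivative $N'$ of the norm on $S$, and then read off Theorem \ref{t7} from the bijection $N':S\to S'$.

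First I would check uniform convexity via the parallelogram identity: if $\norm{x}=\norm{y}=1$, then $\norm{x-y}^2=2\norm{x}^2+2\norm{y}^2-\norm{x+y}^2=4-\norm{x+y}^2$, so that $\norm{(x+y)/2}\ge 1-\delta$ forces $\norm{x-y}\le 2\sqrt{2\delta-\delta^2}$, yielding uniform convexity with an explicit modulus. Next I would compute the G\^ateaux derivative of $N(x)=\sqrt{\scalar{x,x}}$ at a point $x\in S$. Expanding
\begin{equation*}
N(x+tu)^2=1+2t\scalar{x,u}+t^2\norm{u}^2
\end{equation*}
and taking the square root gives $N(x+tu)=1+t\scalar{x,u}+o(t)$, so $N'(x)u=\scalar{x,u}$ for every $x\in S$ and $u\in X$.

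With these two facts Theorem \ref{t1} applies and says that the map $N':S\to S'$, $x\mapsto \scalar{x,\cdot}$, is a bijection. This is exactly the restriction of $\Phi$ to $S$. Extending by homogeneity (Remark \ref{r2}(ii)) yields a bijection $\Phi:X\to X'$ sending $x$ to $\scalar{x,\cdot}$: surjectivity because every $y\in X'\setminus\set{0}$ is of the form $\norm{y}\cdot N'(x_0)=\Phi(\norm{y}x_0)$ for the unique $x_0\in S$ with $N'(x_0)=y/\norm{y}$, and the zero functional is $\Phi(0)$. Linearity of $\Phi$ is immediate from the bilinearity of the scalar product.

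It only remains to verify that $\Phi$ is an isometry, i.e.\ $\norm{\Phi(x)}=\norm{x}$ for every $x\in X$. The Cauchy--Schwarz inequality gives $\abs{\Phi(x)u}\le\norm{x}\norm{u}$, hence $\norm{\Phi(x)}\le\norm{x}$, while choosing $u=x/\norm{x}$ for $x\ne 0$ achieves equality. I expect no real obstacle here; the only delicate step is the computation of $N'$, and even that is routine once one writes the norm as the square root of the scalar product. The substance of the theorem has already been absorbed into the general bijection $N':S\to S'$ of Theorem \ref{t1}, so that the classical projection-on-a-closed-convex-subset argument is entirely bypassed.
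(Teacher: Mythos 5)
Your proposal is correct and follows essentially the same route as the paper: the paper likewise verifies uniform convexity via the parallelogram law, computes $N'(x)u=\scalar{x,u}$ on $S$ by the identical square-root expansion, and invokes Theorem \ref{t1} to conclude that $\Phi$ maps $S$ bijectively onto $S'$. Your only additions---the explicit modulus of convexity and the spelled-out homogeneous extension and isometry checks---are details the paper leaves implicit, and they are all carried out correctly.
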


\begin{proof}
Since $\Phi$ is an isometric isomorphism of $X$ \emph{into} $X'$ by the Cauchy--Schwarz inequality, it suffices to show that $\Phi$ maps $S$ \emph{onto} $S'$.

Every Hilbert space is uniformly convex by the parallelogramma law, and its norm is G\^ateaux differentiable in every $x\in S$ with $N'(x)=\Phi(x)$ because the following relation holds for each $u\in X$:
\begin{equation*}
\norm{x+tu}
=\sqrt{\norm{x}^2+2t\scalar{x,u}+t^2\norm{u}^2}
=1+t\scalar{x,u}+o(t)\qtq{as}t\to 0.
\end{equation*} 
Applying Theorem \ref{t1} we conclude that $\Phi$ is a bijection between $S$ and $S'$.
\end{proof}

As we shall see in the more general context of Orlicz spaces (Lemma \ref{l12}), the $L^p$ norms satisfy the hypotheses of Theorem \ref{t1}.

\begin{theorem}[Riesz \cite{A2Rie1910C10,A2RieNag}]\label{t8}
If $X=L^p$ with $1<p<\infty$ on some measure space and $q=p/(p-1)$ is the conjugate exponent, then the formula
\begin{equation*}
\Phi(g)f:=\int gf\ dx,\quad g\in L^q,\quad f\in L^p
\end{equation*} 
defines a linear isomorphism $\Phi$ of $L^q$ onto $(L^p)'$.
\end{theorem}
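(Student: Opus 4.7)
I would follow the Hilbert space argument in Theorem \ref{t7}. The first step is to show that $\Phi$ is well defined, linear, and an isometric embedding of $L^q$ into $(L^p)'$. H\"older's inequality gives $\norm{\Phi(g)}\le\norm{g}_q$, and testing $\Phi(g)$ against $|g|^{q-1}\sgn(g)\in L^p$ (whose $L^p$-norm equals $\norm{g}_q^{q/p}$) furnishes the reverse inequality.

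The crucial second step is to identify the G\^ateaux derivative $N'(f)$ of the norm $N=\norm{\cdot}_p$ at a point $f\in S$. Differentiating $t\mapsto\int|f+tu|^p\,dx$ at $t=0$ under the integral sign and applying the chain rule to $s\mapsto s^{1/p}$ at $s=1$ yields
\begin{equation*}
N'(f)u=\int |f|^{p-1}\sgn(f)\,u\,dx=\Phi(g_f)u,\qquad g_f:=|f|^{p-1}\sgn(f).
\end{equation*}
Since $|g_f|^q=|f|^p$, one has $g_f\in L^q$ with $\norm{g_f}_q=1$, so $\Phi(g_f)\in S'$.

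The third step invokes Theorem \ref{t1}: granting the forthcoming Lemma \ref{l12} that $L^p$ is uniformly convex and $N$ is G\^ateaux differentiable on $S$, the map $N':S\to S'$ is a bijection, so every $y\in S'$ equals $\Phi(g_f)$ for some $f\in S$. Linearity and positive homogeneity then give surjectivity of $\Phi:L^q\to (L^p)'$, and injectivity is immediate from the isometry of step one.

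The main obstacle I anticipate is the differentiation under the integral sign in step two: one needs an integrable majorant for $\bigl|\,|f+tu|^p-|f|^p\bigr|/|t|$ uniformly in small $t$, and the pointwise inequality $\bigl|\,|a+b|^p-|a|^p\bigr|\le p(|a|+|b|)^{p-1}|b|$ together with H\"older provides the dominant $p(|f|+|u|)^{p-1}|u|\in L^1$. The uniform convexity of $L^p$ and the G\^ateaux differentiability itself are the genuinely $L^p$-specific ingredients, but both are deferred by the authors to Lemma \ref{l12}.
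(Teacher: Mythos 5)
Your proposal is correct and follows essentially the same route as the paper: reduce to surjectivity via H\"older's isometry, invoke Lemma \ref{l12} to verify the hypotheses of Theorem \ref{t1}, and identify $N'(h_0)u=\int\abs{h_0}^{p-1}(\sgn h_0)u\,dx$ (the paper writes the exponent as $p/q=p-1$), so that every unit functional is $\Phi\bigl(\abs{h_0}^{p-1}\sgn h_0\bigr)$. Even your anticipated obstacle --- the integrable majorant for differentiation under the integral sign --- matches the paper's treatment in the proof of Lemma \ref{l12}(iii), where a mean-value-theorem dominant plays the same role as your pointwise inequality.
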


\begin{proof}
Since $\Phi$ is an isometric isomorphism of $L^q$ \emph{into} $(L^p)'$ by the H\"older inequality, it suffices to show that each $\varphi\in (L^p)'$ of norm one has the form $\varphi=\Phi(g)$ with some $g\in L^q$.

Due to Lemma \ref{l12}, the hypotheses of Theorem \ref{t1} are fulfilled, and the G\^ateaux derivative of the norm of $L^p$ in any $h_0\in S$ is given by the formula 
\begin{equation*}
N'(h_0)u=\int \abs{h_0}^{\frac{p}{q}}(\sgn h_0)u\ dx.
\end{equation*}
This implies $\varphi=\Phi(g_0)$ with $g_0=\abs{h_0}^{\frac{p}{q}}\sgn(h_0)$.
\end{proof}

\begin{remarks}\label{r9}\mbox{}
\begin{enumerate}[\upshape (i)]
\item The formula $F(h):=\abs{h}^{\frac{p}{q}}\sgn h$ defines a bijection between $L^p$ and $L^q$ whose inverse is $F^{-1}(g):=\abs{g}^{\frac{q}{p}}\sgn g$.
Since $\Phi=N'\circ F^{-1}$ and $N', \Phi$ are homeomorphisms by Theorems \ref{t1}, \ref{t8} and Remark \ref{r2} (iii), we conclude that $F$ is a homeomorphism between $L^p$ and $L^q$.
This is a special case of Mazur's theorem \cite{Mazur1929} stating that the spaces $L^p$ are homeomorphic for all finite $p$.

\item We recall the extensions of Theorems \ref{t7} and \ref{t8} to complex Hilbert and $L^p$ spaces: the formulas
\begin{equation*}
\Phi(g):=\scalar{g,f}
\qtq{and}
\Phi(g)f:=\int \overline{g} f\ dx
\end{equation*}
define \emph{conjugate} linear isometries of of $X$ onto $X'$ and of $L^q$ onto $(L^p)'$, respectively.

Only the surjectivity needs an additional argument, and this follows by the classical method of Murray \cite{Murray1936}.
For example, if $\varphi\in (L^p)'$, then $\Phi(g)=\varphi$ with $g:=g_1+ig_2$ by a direct computation, where the real-valued functions $g_1, g_2\in L^q$ are defined by the equalities
\begin{equation*}
\real\varphi(h)=\int g_1h\ dx\qtq{and}\real\varphi(ih)=\int g_2h\ dx
\end{equation*}
for all real-valued functions $h\in L^p$.
\item Another proof of Theorem \ref{t8} was given recently in \cite{Shioji2018}.
\end{enumerate}
\end{remarks}

\section{Duals of Orlicz spaces}\label{s3}

In this section we generalize the proof of Theorem \ref{t8} to a class of Orlicz spaces \cite{KraRut1961,Luxemburg1955,
Orlicz1932,Orlicz1936,Zaanen1983}.
First we briefly recall some basic facts.
Let $P:\RR\to\RR$ be an even convex function satisfying the conditions
\begin{equation*}
\lim_{u\to 0}\frac{P(u)}{u}=0
\qtq{and}
\lim_{u\to \infty}\frac{P(u)}{u}=\infty,
\end{equation*}
and $Q:\RR\to\RR$ its convex conjugate, defined by the formula
\begin{equation*}
Q(v):=\sup_{u\in\RR}\set{u\abs{v}-P(u)}.
\end{equation*}
For example, if $P(u)=p^{-1}\abs{u}^p$ for some $1<p<\infty$, then $Q(v)=q^{-1}\abs{v}^q$ with the conjugate exponent $q=p/(p-1)$.

Given a measure space $(\Omega,\mm,\mu)$, henceforth we  consider only measurable functions $f,g,h:\Omega\to\RR$.
The \emph{Orlicz spaces} $L^P$ and $L^Q$ are defined by
\begin{align*}
f\in L^P&\Longleftrightarrow fg\qtq{is integrable whenever}\int Q(g)\ dx<\infty
\intertext{and}
g\in L^Q&\Longleftrightarrow fg\qtq{is integrable whenever}\int P(f)\ dx<\infty.
\end{align*}
If we endow $L^P$ with the \emph{Luxemburg norm}
\begin{equation*}
\norm{f}_{P,\text{Lux}}:=\inf\set{k>0\ :\ \int P(k^{-1}f)\ dx\le 1}
\end{equation*}
and $L^Q$ with the \emph{Orlicz norm}
\begin{equation*}
\norm{g}_{Q,\text{Orl}}:=\sup\set{\int fg\ dx\ :\ \int P(f)\ dx\le 1},
\end{equation*}
then they become Banach spaces, 
\begin{equation*}
\int \abs{fg}\ dx\le \norm{f}_{P,\text{Lux}}\norm{g}_{Q,\text{Orl}}
\end{equation*}
for all $f\in L^P$ and $g\in L^Q$ because $\norm{f}_{P,\text{Lux}}\le 1\Longleftrightarrow \int P(f)\ dx\le 1$, and the formula
\begin{equation*}
\Phi(g)(f):=\int fg\ dx
\end{equation*}
defines a linear isometry $\Phi:L^Q\to (L^P)'$.

\begin{theorem}\label{t10}
Assume that $P$ is  differentiable, and satisfies the following conditions:
\begin{enumerate}[\upshape (i)]
\item there exists a constant $\alpha$ such that $P(2u)\le \alpha P(u)$ for all $u\in\RR$;
\item for each $\eps>0$ there exists a $\rho(\eps)>0$ such that 
\begin{equation*}
\abs{u-v}\ge\eps(\abs{u}+\abs{v})\Longrightarrow 
\frac{P(u)+P(v)}{2}-P\left(\frac{u+v}{2}\right)\ge\rho(\eps)\frac{P(u)+P(v)}{2}.
\end{equation*}
\end{enumerate}
Then $\Phi:L^Q\to (L^P)'$ is an isometrical isomorphism of $L^Q$ \emph{onto} $(L^P)'$.
\end{theorem}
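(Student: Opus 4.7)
The plan is to follow exactly the route used for Theorem \ref{t8}: since $\Phi$ is already an isometric embedding of $L^Q$ into $(L^P)'$ by H\"older's inequality for Orlicz spaces, it suffices to prove surjectivity. Given $\varphi\in(L^P)'$ of norm one, I will produce $g_0\in L^Q$ with $\Phi(g_0)=\varphi$ by (a) verifying that the Luxemburg-normed space $L^P$ satisfies the hypotheses of Theorem \ref{t1}, (b) computing the G\^ateaux derivative $N'$ of the Luxemburg norm explicitly in terms of $P'$, and (c) invoking Theorem \ref{t1} to realize $\varphi$ as $N'(h_0)$ for some $h_0\in S$.

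For (a), the $\Delta_2$-type condition (i) will be used to guarantee the basic identity $\norm{f}_{P,\text{Lux}}=1\Leftrightarrow\int P(f)\,dx=1$ and, more generally, that $\int P(k^{-1}f)\,dx$ depends continuously and strictly monotonically on $k$; it also controls integrability of compositions and permits dominated-convergence arguments. For uniform convexity, I will lift the pointwise estimate (ii) to the level of integrals: if $f,g\in S$ and $\norm{(f+g)/2}_{P,\text{Lux}}\ge 1-\delta$, then $\int P(f)\,dx=\int P(g)\,dx=1$ and $\int P((f+g)/2)\,dx\ge 1-\eta(\delta)$, with $\eta(\delta)\to 0$. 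Splitting $\Omega$ into the set where $\abs{f-g}\ge\eps(\abs{f}+\abs{g})$ and its complement and applying (ii) on the first set yields
\begin{equation*}
\tfrac{1}{2}\int P(f)+P(g)\,dx-\int P\!\left(\tfrac{f+g}{2}\right)\,dx\ge \rho(\eps)\int_{\{\abs{f-g}\ge\eps(\abs{f}+\abs{g})\}}\tfrac{P(f)+P(g)}{2}\,dx,
\end{equation*}
so the mass of $P(f)+P(g)$ on this set is small, and a second application of (i) (together with the elementary bound $\abs{f-g}<\eps(\abs{f}+\abs{g})$ on the complement) controls $\norm{f-g}_{P,\text{Lux}}$.

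For (b), I fix $h_0\in S$ and $u\in L^P$ and apply the implicit function theorem (or a direct computation) to the equation $\int P(k^{-1}(h_0+tu))\,dx=1$ to express $k(t):=\norm{h_0+tu}_{P,\text{Lux}}$ as a smooth function of $t$ near $0$. Differentiating under the integral, which is legitimate by (i), gives
\begin{equation*}
N'(h_0)u=k'(0)=\frac{\int P'(h_0)\,u\,dx}{\int P'(h_0)\,h_0\,dx}.
\end{equation*}
The Young--Fenchel equality $P'(h_0)\cdot h_0 = P(h_0)+Q(P'(h_0))$ together with the $\Delta_2$ condition shows that $P'(h_0)\in L^Q$, so $N'(h_0)=\Phi(g_0)$ with $g_0:=P'(h_0)/\int P'(h_0)h_0\,dx\in L^Q$. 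Finally, applying Theorem \ref{t1} to $L^P$, each $\varphi\in S'$ equals $N'(h_0)=\Phi(g_0)$ for some $h_0\in S$, completing surjectivity.

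The main obstacle I anticipate is step (a), specifically converting the pointwise uniform-convexity assumption (ii) into uniform convexity of the Luxemburg norm; the delicate point is replacing the modular $\int P(\cdot)\,dx$ by the norm, which requires careful use of the $\Delta_2$ condition to quantify how the modular distance controls the norm distance. The G\^ateaux derivative computation, by contrast, is routine once one has $P'(h_0)\in L^Q$ and the nondegeneracy $\int P'(h_0)h_0\,dx>0$ (which follows because $h_0\ne 0$ and $P$ is strictly convex away from zero by (ii)).
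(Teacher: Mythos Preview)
Your proposal is correct and follows the paper's overall strategy: prove uniform convexity of $L^P$ from the pointwise estimate (ii), locate the minimizer $h_0$ of the norm on the hyperplane $\{\varphi=1\}$, and differentiate at $h_0$ to identify $\varphi$ with $\Phi\bigl(P'(h_0)\big/\int P'(h_0)h_0\,dx\bigr)$. The one technical difference is that you apply Theorem~\ref{t1} literally, which obliges you to show that the Luxemburg \emph{norm} itself is G\^ateaux differentiable on $S$, via the implicit function theorem applied to $\int P\bigl(k^{-1}(h_0+tu)\bigr)\,dx=1$. The paper sidesteps this: since $\norm{f}_{P,\text{Lux}}\le 1\Leftrightarrow\int P(f)\,dx\le 1$, the norm minimizer on the hyperplane is automatically also the \emph{modular} minimizer there, so one may differentiate $t\mapsto\int P\bigl(h_0+t[f-\varphi(f)h_0]\bigr)\,dx$ directly (Lemma~\ref{l12}\,(iii)) and never compute $N'$. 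This avoids the implicit function argument and any need for joint $C^1$ regularity; your route works as well but is a little heavier. For the uniform convexity step, the paper condenses the set-splitting you outline into a single pointwise inequality of McShane,
\begin{equation*}
P\!\left(\tfrac{u-v}{2}\right)\le \eps\cdot\tfrac{P(u)+P(v)}{2}+\tfrac{1}{\rho(\eps)}\left(\tfrac{P(u)+P(v)}{2}-P\!\left(\tfrac{u+v}{2}\right)\right),
\end{equation*}
which then integrates in one line; this is cleaner than handling the two regions separately but amounts to the same idea.
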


\begin{example}\label{e11}
If $1<p<\infty$ and $P(u):=p^{-1}\abs{u}^p$, then Theorem \ref{t10} reduces to Riesz's theorem $(L^p)'=L^q$ with $q=p/(p-1)$.
Indeed, $P$ is differentiable with $P'(u)=\abs{u}^{p-1}\sgn u$, and the condition (i) is satisfied with $\alpha=2^p$.
To check (ii) we may assume by homogeneity that $(u,v)$ belongs to the compact set
\begin{equation*}
K:=\set{(u,v)\in\RR^2\ :\ \abs{u}+\abs{v}=1}.
\end{equation*}
The inequality follows by observing that the left hand side has a positive minimum on $K$ by continuity and strict convexity, while  $P(u)+P(v)$ has a finite maximum here.
Finally, on the classical spaces $L^p$ the Orlicz and Luxemburg norms coincide.
\end{example}

For the proof of Theorem \ref{t10} we admit temporarily the following

\begin{lemma}\label{l12}
Assume that the conditions of Theorem \ref{t10} are satisfied.
Then
\begin{enumerate}[\upshape (i)]
\item $L^P$ is uniformly convex;
\item $P'(h)\in L^Q$ for every $h\in L^P$;
\item for any fixed $f,h\in L^P$ the function $t\mapsto \int P(h+tf)\ dx$ is differentiable in zero, and its derivative is equal to $\int P'(h)f\ dx$.
\end{enumerate}
\end{lemma}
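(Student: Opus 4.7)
The plan is to handle each of the three conclusions separately. Parts (ii) and (iii) reduce to Young duality and dominated convergence, with the growth bound $P(2u) \le \alpha P(u)$ of Theorem \ref{t10} doing all the work; part (i) is the genuinely hard statement, requiring the modulus inequality of Theorem \ref{t10} together with careful passage between the modular $\int P(\cdot)\,dx$ and the Luxemburg norm. Throughout I use that the growth bound identifies $L^P$ with $\set{f : \int P(f)\,dx < \infty}$, via the standard iteration $P(\lambda u) \le \alpha^{\lceil \log_2 \lambda \rceil} P(u)$.

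For (ii), Fenchel's identity gives $Q(P'(u)) = u\,P'(u) - P(u)$, and the convexity estimate $P(2u) \ge P(u) + u\,P'(u)$ combined with the growth bound yields $u\,P'(u) \le (\alpha - 1)\,P(u)$, hence $Q(P'(h)) \le (\alpha - 2)\,P(h)$ pointwise. Integrating gives $\int Q(P'(h))\,dx < \infty$, and then Young's inequality $|f\,P'(h)| \le P(f) + Q(P'(h))$ shows that $f\,P'(h) \in L^1$ for every $f \in L^P$, i.e.\ $P'(h) \in L^Q$ in the paper's Köthe-style sense.

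For (iii), since $t \mapsto P(h+tf)$ has pointwise derivative $P'(h)\,f$ at $t = 0$, it suffices to dominate the difference quotient uniformly for $|t| \le 1$. The mean value theorem and the monotonicity of $|P'|$ on $[0, \infty)$ give
\begin{equation*}
\left|\frac{P(h+tf) - P(h)}{t}\right| = |P'(h + \theta t f)|\,|f| \le P'(|h| + |f|)\,|f|,
\end{equation*}
and applying $u\,P'(u) \le (\alpha - 1)P(u)$ at $u = |h| + |f|$ followed by $P(a+b) \le \alpha(P(a) + P(b))$ (another consequence of $\Delta_2$) dominates this by $(\alpha - 1)\alpha\,(P(h) + P(f))$, which is integrable; dominated convergence then concludes.

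The real work is (i). Assume $\norm{f}_{P,\text{Lux}} = \norm{g}_{P,\text{Lux}} = 1$ and $\norm{(f+g)/2}_{P,\text{Lux}} \ge 1 - \delta$; I aim to make $\norm{f-g}_{P,\text{Lux}} \le \eps'$ by choosing $\delta$ small. The growth bound converts the first two norm hypotheses into $\int P(f)\,dx = \int P(g)\,dx = 1$, and the tangent-line estimate $P((1-\delta)u) \ge P(u) - \delta\,u\,P'(u) \ge (1 - (\alpha - 1)\delta)\,P(u)$, applied to $u = (f+g)/(2(1-\delta))$, converts the third into $\int P((f+g)/2)\,dx \ge 1 - (\alpha-1)\delta$. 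Splitting $\Omega$ along $E_\eps := \set{x : |f(x) - g(x)| \ge \eps(|f(x)| + |g(x)|)}$, the modulus inequality of Theorem \ref{t10} integrates to
\begin{equation*}
\int P\!\left(\frac{f+g}{2}\right)dx \le 1 - \rho(\eps) \int_{E_\eps} \frac{P(f) + P(g)}{2}\,dx,
\end{equation*}
so that $\int_{E_\eps}(P(f)+P(g))\,dx \le 2(\alpha-1)\delta/\rho(\eps)$. Convexity bounds $P((f-g)/2) \le (P(f)+P(g))/2$ on $E_\eps$, while on its complement the elementary $P(\eps u) \le \eps P(u)$ (valid for $\eps \le 1$) gives $P((f-g)/2) \le \eps (P(f)+P(g))/2$, and adding the two pieces yields $\int P((f-g)/2)\,dx \le \eps + (\alpha-1)\delta/\rho(\eps)$. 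A final rescaling via the growth bound turns this small modular into $\int P((f-g)/\eps')\,dx \le 1$, i.e.\ $\norm{f-g}_{P,\text{Lux}} \le \eps'$. The main obstacle is precisely the order in which the constants must be chosen: $\eps'$ is prescribed, then $\eps$ is picked small enough that the complement contribution is harmless, and only then may $\delta$ be chosen small enough to beat $\rho(\eps)$ in the denominator.
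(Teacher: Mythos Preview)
Your proof is correct and follows essentially the same route as the paper's: parts (ii) and (iii) rest on the key estimate $uP'(u)\le(\alpha-1)P(u)$ (you reach it via Fenchel--Young, the paper more directly, but the content is identical), and for (i) both arguments split according to whether $\abs{u-v}\gtrless\eps(\abs{u}+\abs{v})$ --- the paper packages this into McShane's pointwise inequality \eqref{2} before integrating, while you carry out the same case distinction on the level of the integrals over $E_\eps$ and its complement. Your treatment of the passage between the Luxemburg norm and the modular (the tangent-line estimate and the final $\Delta_2$ rescaling) is in fact more explicit than the paper's, which stops at the modular inequality $\int P(f-g)\,dx\le 2\alpha\eps$ and leaves that translation to the reader.
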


\begin{proof}[Proof of Theorem \ref{t10}]
As before, it suffices to show that every 
$\varphi\in (L^P)'$ of unit norm is of the form $\Phi(h)$ for a suitable $h\in L^P$.

As in the proof of Theorem \ref{t1}, by the uniform convexity of $L^P$ there exists a function $h\in L^P$ such that $\varphi(h)=1$, and $\norm{f}>1$ for all other functions $f$ in the closed affine hyperplane $H:=\set{f\in L^P\ :\ \varphi(f)=1}$.

By the definition of the Luxemburg norm we have
\begin{equation*}
\int P(h)\ dx=1, \qtq{and}\int P(f)\ dx>1\qtq{for all other}f\in H,
\end{equation*}
so that $h$ also minimizes the functional $f\mapsto \int P(f)\ dx$ on $H$.
Hence for any fixed $f\in L^P$ the  function 
\begin{equation*}
t\mapsto \int P\left(h+t[f-\varphi(f)h]\right)\ dx
\end{equation*}
has a minimum in $0$, and therefore its derivative vanishes here:
\begin{equation*}
\int P'(h)\left(f-\varphi(f)h\right)\ dx=0.
\end{equation*}
This is equivalent to $\varphi=\varphi_g$ with 
\begin{equation*}
g=\frac{P'(h)}{\int P'(h)h\ dx}\in L^Q.\qedhere
\end{equation*}
\end{proof}

\begin{proof}[Proof of Lemma \ref{l12}]
(i) Following McShane \cite{McShane1950} first we prove for all $\eps\in (0,1)$ the inequality
\begin{equation}\label{2}
P\left(\frac{u-v}{2}\right)\le\eps\cdot\frac{P(u)+P(v)}{2}
+\frac{1}{\rho(\eps)}\left(\frac{P(u)+P(v)}{2}-P\left(\frac{u+v}{2}\right)\right).
\end{equation}
In case $\abs{u-v}\le \eps(\abs{u}+\abs{v})$ this readily follows from the convexity and evenness of $P$ and from the equality $P(0)=0$ because
\begin{equation*}
P\left(\frac{u-v}{2}\right)
\le P\left(\frac{\eps}{2}\abs{u}+\frac{\eps}{2}\abs{v}+(1-\eps)\cdot 0\right)
\le\eps\cdot\frac{P(u)+P(v)}{2}.
\end{equation*}
If $\abs{u-v}\ge \eps(\abs{u}+\abs{v})$, then we infer from the convexity of $P$ and from the condition (ii) of Theorem \ref{t10} that
\begin{equation*}
P\left(\frac{u-v}{2}\right)
\le \frac{P(u)+P(v)}{2}
\le \frac{1}{\rho(\eps)}\left(\frac{P(u)+P(v)}{2}-P\left(\frac{u+v}{2}\right)\right).
\end{equation*} 
\medskip 

We complete the proof of the lemma by showing that if
\begin{equation*}
\int P(f)\ dx\le 1,\quad 
\int P(g)\ dx\le 1\qtq{and}
\int P\left(\frac{f+g}{2}\right)\ dx\ge 1-\eps \rho(\eps),
\end{equation*}
then 
\begin{equation*}
\int P(f-g)\ dx\le 2\alpha\eps
\end{equation*}
with $\alpha$ given by the condition (i) of Theorem \ref{t10}. 
This follows by using the inequality \eqref{2}:
\begin{align*}
\frac{1}{\alpha}\int P(f-g)\ dx
&=\int P\left(\frac{f-g}{2}\right)\\
&\le\eps\cdot\int \frac{P(f)+P(g)}{2}\ dx
+\frac{1}{\rho(\eps)}\int \frac{P(f)+P(g)}{2}-P\left(\frac{f+g}{2}\right)\ dx\\
&\le \eps+\frac{1}{\rho(\eps)}\left(1-\int P\left(\frac{f+g}{2}\right)\ dx\right)\\
&\le \eps+\frac{1-[1-\eps \rho(\eps)]}{\rho(\eps)}
=2\eps.
\end{align*}
\medskip 

(ii) By definition we have to show that $P'(h)f$ is integrable for every $f\in L^P$.
Setting $g:=\abs{h}+\abs{h}\in L^P$, this follows from the inequalities
\begin{equation*}
\int\abs{P'(h)f}\ dx
\le \int\abs{P'(g)g}\ dx
\le \alpha \int P(g)\ dx<\infty.
\end{equation*}
The last inequality is a consequence of the condition (i) of Theorem \ref{t10}: we have
\begin{equation*}
uP'(u)\le \int_u^{2u}P'(t)\ dt=P(2u)-P(u)\le\alpha P(u)
\end{equation*} 
for all $u\ge 0$, and $uP'(u)$ is an even function.
(The equality holds because, as a convex function, $P$ is absolutely continuous.)
\medskip 

(iii) We have
\begin{equation*}
\lim_{t\to 0}\frac{P(f+th)-P(f)}{t}
=P'(f)h
\end{equation*}
almost everywhere (where both $f$ and $h$ are defined and are finite).
Furthermore, if $0<\abs{t}\le 1$, then  
\begin{equation*}
\abs{\frac{P(f+th)-P(f)}{t}}=\abs{P'(f+t'h)h}
\le \abs{P'(g)g}
\end{equation*}
with $g:=\abs{f}+\abs{h}\in L^P$ by the Lagrange mean value theorem with some $t'$ between $0$ and $t$ (depending on $x$).
The last function is integrable because
\begin{equation*}
\int \abs{P'(g)g}\ dx\le \norm{g}_{P,\text{Lux}}\norm{P'(g)}_{Q,\text{Orl}}<\infty.
\end{equation*} 
Applying Lebesgue's dominated convergence theorem we conclude that
\begin{equation*}
\lim_{t\to 0}\frac{\int P(f+th)\ dx-\int P(f)\ dx}{t}
=\int P'(f)h\ dx.\qedhere
\end{equation*}
\end{proof}

\begin{remark}\label{r13}
A complete, but necessarily rather technical characterization of uniformly convex Orlicz space was given by  different tools in  \cite{Milnes1957}.
\end{remark}

\end{document}